\DeclareMathOperator{\co}{co}
\DeclareMathOperator*{\IM}{Im}
\newcommand*{\cd}{(\cdot)}
\newcommand*{\lt}{L_2(\mathbb R^d)}
\newcommand{\lp}{L_p(\mathbb R^d)}
\newcommand{\lpp}{L_p(T,\mu)}
\newcommand{\lqq}{L_q(T,\mu)}
\newcommand{\lrr}{L_r(T,\mu)}
\newcommand*{\li}{L_\infty(\mathbb R^d)}
\newcommand*{\Wt}{W_p^{\mathcal A}(\mathbb R^d,\ov\delta)}
\newcommand*{\WFt}{W_F^{\mathcal A}(\mathbb R^d)}
\newcommand*{\wa}{\widehat a}
\newcommand*{\la}{\langle}
\newcommand*{\ra}{\rangle}
\newcommand*{\wxi}{\widehat\xi}
\newcommand*{\weta}{\widehat\eta}
\newcommand*{\ov}{\overline}
\newcommand*{\wah}{\widehat a}
\newcommand*{\xin}{\wxi_{\weta}}
\newcommand*{\wAh}{\widehat A}
\newcommand*{\wl}{\widehat\lambda}
\newcommand*{\td}{\widetilde\delta}
\newcommand*{\tgg}{\widetilde\gamma}
\newcommand*{\iRd}{\int_{\mathbb R^d}}
\newtheorem{theorem}{Theorem}
\newtheorem{corollary}{Corollary}
\newtheorem{proposition}{Proposition}
\begin{document}

\title[Exact inequalities and optimal recovery]{Exact inequalities and optimal recovery by inaccurate information}
\author{K.~Yu.~Osipenko}

\address{Moscow State University,\\
Institute for Information Transmission Problems,
Russian Academy of Sciences, Moscow}

\email{kosipenko@yahoo.com}
\subjclass[2010]{26DD15, 41A65, 41A46, 49N30}
\keywords{Optimal recovery, Carlson type inequalities, exact constants}

\begin{abstract}
The paper considers a multidimensional problem of optimal recovery of an operator whose action is represented by multiplying the original function by a weight function of a special type, based on inaccurately specified information about the values of operators of a similar type. An exact inequality for the norms of such operators is obtained. The problem under consideration is a generalization of the problem of optimal recovery of a derivative based on other inaccurately specified derivatives in the space $\mathbb R^d$ and the problem of an exact inequality, which is an analogue of the Hardy--Littlewood--Polya inequality.
\end{abstract}

\maketitle

\section{General setting}

Let $X$ be a linear space, $Y_0,Y_1,\ldots,Y_N$ be normed linear spaces, and
$\Lambda_j\colon X\to Y_j$, $j=0,1,\ldots,N$, be linear operators. Consider the problem of optimal recovery of $\Lambda_0$ on the set
$$W=\{\,x\in X:\|\Lambda_jx\|_{Y_j}\le\delta_j,\ \delta_j>0,\ j=m+1,\ldots,N\,\},$$
where $1\le m\le N$ (if $m=N$, then $W=X$), by values of operators $\Lambda_1,\ldots,\Lambda_m$ given with errors. More precisely, we will assume that for each $x\in W$
we know $y=(y_1,\ldots,y_m)\in Y_1\times\ldots\times Y_m$ such that $\|\Lambda_jx-y_j\|_{Y_j}\le\delta_j$, $\delta_j>0$, $j=1,\ldots,m$.

Any recovery method by known information $y=(y_1,\ldots,y_m)$ should give an element from $Y_0$ that is taken as an approximate value of $\Lambda_0x$. Thus, every recovery method is a mapping
$\Phi\colon Y_1\times\ldots\times Y_m\to Y_0$. The error of a~method~$\Phi$ is defined as
$$e(\Lambda_0,W,\delta,\Phi)=\sup_{\substack{x\in W,\ y=(y_1,\ldots,y_m)\in Y_1\times\ldots
\times Y_m\\\|\Lambda_jx-y_j\|_{Y_j}\le\delta_j,\ j=1,\ldots,m}}\|\Lambda_0x-\Phi(y)\|_{Y_0},$$
where $\delta=(\delta_1,\ldots,\delta_m)$.

We are interested in those methods for which the error is minimal, i.e. those methods
$\widehat\Phi$ for which
\begin{equation}\label{n1}
e(\Lambda_0,W,\delta,\widehat\Phi)=\inf_{\Phi\colon Y_1\times\ldots\times Y_m\to Y_0}
e(\Lambda_0,W,\delta,\Phi).
\end{equation}
We will call such methods {\it optimal recovery methods}. The quantity on the right-hand side of \eqref{n1} will be called {\it the error of optimal recovery} and denoted by $E(\Lambda_0,W,\delta)$.

Let
\begin{equation}\label{1}
\begin{gathered}
\alpha=(\alpha_1,\ldots,\alpha_k)\in\mathbb R^k_+,\quad
\varphi\cd=(\varphi_1\cd,\ldots,\varphi_k\cd),\\
\varphi^\alpha\cd=\varphi_1^{\alpha_1}\cd\ldots\varphi_k^{\alpha_k}\cd,
\end{gathered}
\end{equation}
where $\varphi_j\cd$, $j=1,\ldots,k$ are continuous (generally speaking, complex-valued) functions on $\mathbb R^d$. Set
$$\mathcal W_p^{\mathcal A}(\mathbb R^d)=\left\{\,x\cd\in\lp:\varphi^{\alpha^j}\cd x\cd\in\lp,\,j=1,\ldots,N\,\right\},$$
where $1\le p\le\infty$, $\alpha^j\in\mathbb R_+^d$, $j=1,\ldots,N$, and  $\mathcal A=\{\alpha^1,\ldots,\alpha^N\}$. We define operators $\Lambda_j\colon\mathcal W_p^{\mathcal A}(\mathbb R^d)\to\lp$ as follows
$$\Lambda_jx\cd=\varphi^{\alpha^j}\cd x\cd,\quad j=0,1,\ldots,N.$$
For these operators we consider problem \eqref{n1}, in which $X=\mathcal W_p^{\mathcal A}(\mathbb R^d)$, $Y_0=Y_1=\ldots=Y_N=\lp$. The corresponding set of functions $W$ is denoted by $\Wt$ where
$\ov\delta=(\delta_{m+1},\ldots,\delta_N)$. The case when $\varphi(\xi)=i\xi$ was considered in \cite{OsIz}.

The consideration of the problem posed is connected with the desire to generalize the recovery problem for functions of many variables from inaccurately given values of derivatives (see \cite[p.~249]{Os1}). In addition, as a consequence of the solution of the problem under study, a generalization of the exact inequality of the Hardy--Littlewood--Polya type is obtained (see~\cite{MT}).

Note that the idea of considering recovery problems for a whole family of operators was used in  \cite{3,4,5,6}.

\section{General result}

Set
$$Q=\co\{(\alpha^1,\ln1/\delta_1),\ldots,(\alpha^N,\ln1/\delta_N)\},$$
where $\co A$ is the convex hall of A. Define the function $S\cd$ on $\mathbb R^k$ by the equality
\begin{equation}\label{510ss}
S(\alpha)=\max\{\,z\in\mathbb R:(\alpha,z)\in Q\,\}
\end{equation}
($S(\alpha)=-\infty$, if $(\alpha,z)\notin Q$ for all $z$).

Let $\alpha^0\in\co\mathcal A$ and let $z=\la\alpha,\weta\ra+\wah$, where $\weta=(\weta_1,\ldots,\weta_k)\in\mathbb R^k$, be a support hyperplane to the graph of $S\cd$ at $\alpha^0$. By Caratheodory’s theorem there exist points in this hyperplane $(\alpha^{j_s},\ln1/\delta_{j_s})$, $s=1,\ldots,l$, $l\le k+1$, such that
\begin{equation}\label{510th}
\alpha^0=\sum_{s=1}^l\theta_{j_s}\alpha^{j_s},\quad\theta_{j_s}>0,\ s=1,\ldots,l,
\quad\sum_{s=1}^l\theta_{j_s}=1.
\end{equation}
Set
$$M=\{j_1,\ldots,j_l\}\cap\{1,\ldots,m\}.$$

\begin{theorem}\label{T1}
Let $\alpha^0\in\co\mathcal A$. Assume that for any $a_1,\ldots,a_k>0$ there exists $\wxi\in\mathbb R^d$ such that $|\varphi_j(\wxi\,)|=a_j$, $j=1,\ldots,k$. Then
\begin{equation}\label{Eop}
E(\Lambda_0,\Wt,\delta)=e^{-S(\alpha^0)}.
\end{equation}
If $M\ne\emptyset$, then all methods
\begin{equation}\label{t1}
\Phi(y\cd)\cd=\sum_{j\in M}a_j\cd y_j\cd,
\end{equation}
where functions $a_{j_s}\cd$, $s=1,\ldots,l$, satisfy the conditions
\begin{gather}
\sum_{s=1}^l\varphi^{\alpha^{j_s}}(\xi)a_{j_s}(\xi)=\varphi^{\alpha^0}(\xi),\label{u1}\\
\begin{cases}\displaystyle\sum_{s=1}^l\dfrac{\delta_{j_s}^{p'}|a_{j_s}(\xi)|^{p'}}
{\theta_{j_s}^{p'/p}}\le
e^{-p'S(\alpha^0)},&1<p<\infty,\quad\dfrac1p+\dfrac1{p'}=1,\\
\displaystyle\max_{1\le s\le l}\dfrac{\delta_{j_s}|a_{j_s}(\xi)|}{\theta_{j_s}}\le
e^{-S(\alpha^0)},&p=1,\\
\displaystyle\sum_{s=1}^l|a_{j_s}(\xi)|\delta_{j_s}\le
e^{-S(\alpha^0)},&p=\infty,\end{cases}\label{u2}
\end{gather}
for almost all $\xi\in\mathbb R^d$, are optimal.

If $M=\emptyset$ then the method $\Phi(y\cd)\cd=0$ is optimal.
\end{theorem}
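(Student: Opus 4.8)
The plan is to establish the lower and upper bounds for $E(\Lambda_0,\Wt,\delta)$ separately, using the standard duality scheme for optimal recovery combined with the pointwise (in the Fourier/phase variable $\xi$) nature of the operators $\Lambda_j$.

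\emph{Lower bound.} For the inequality $E(\Lambda_0,\Wt,\delta)\ge e^{-S(\alpha^0)}$ I would exhibit a ``hard'' function. Since $(\alpha^0,S(\alpha^0))$ lies on the support hyperplane $z=\la\alpha,\weta\ra+\wah$, and $(\alpha^{j_s},\ln 1/\delta_{j_s})$ lie on it as well, we have $\la\alpha^{j_s},\weta\ra+\wah=\ln1/\delta_{j_s}$ for $s=1,\dots,l$, while for the remaining indices $j$ the point $(\alpha^j,\ln1/\delta_j)$ lies on or below the hyperplane, i.e. $\la\alpha^j,\weta\ra+\wah\ge\ln1/\delta_j$. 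By the surjectivity hypothesis, for a suitable vector $a=(a_1,\dots,a_k)$ with $a_i=e^{-\weta_i}$ there is $\wxi$ with $|\varphi_i(\wxi)|=a_i$; then $|\varphi^{\alpha^j}(\wxi)|=e^{-\la\alpha^j,\weta\ra}=\delta_j e^{\wah}$ for $j=j_s$, and $\le\delta_j e^{\wah}$ otherwise. Localizing a bump of $L_p$-norm one around $\wxi$ (on a small cube, using continuity of the $\varphi_i$) and scaling by $e^{\wah}$, I get $x_0$ with $\|\Lambda_j x_0\|_{L_p}\le\delta_j$ for $j=m+1,\dots,N$ (so $x_0\in\Wt$ after a harmless rescaling of $\delta$'s) and with $\|\Lambda_j x_0\|_{L_p}\le\delta_j$ for \emph{all} $j=1,\dots,m$, so that $y=0$ is admissible information for both $x_0$ and $-x_0$. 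Then $E\ge\|\Lambda_0 x_0\|_{L_p}=e^{\wah}|\varphi^{\alpha^0}(\wxi)|=e^{\wah}e^{-\la\alpha^0,\weta\ra}=e^{-S(\alpha^0)}$, since $\la\alpha^0,\weta\ra+\wah=S(\alpha^0)$.

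\emph{Upper bound and optimality of \eqref{t1}.} Here I would bound the error of the method $\Phi(y)=\sum_{j\in M}a_j y_j$ directly. Given $x\in\Wt$ and admissible $y$, write $\Lambda_0 x-\Phi(y)=\varphi^{\alpha^0}x-\sum_{s}a_{j_s}y_{j_s}$ (extending $a_{j_s}\equiv0$ when $j_s\notin M$ is impossible since those indices correspond to constraints, not data — so one must check $\{j_1,\dots,j_l\}\subseteq\{1,\dots,m\}$ forces $M=\{j_1,\dots,j_l\}$; if some $j_s>m$ the corresponding term uses the constraint $\|\varphi^{\alpha^{j_s}}x\|\le\delta_{j_s}$ instead). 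Using \eqref{u1}, at each point $\xi$,
$$
\varphi^{\alpha^0}(\xi)x(\xi)-\sum_s a_{j_s}(\xi)y_{j_s}(\xi)
=\sum_s a_{j_s}(\xi)\bigl(\varphi^{\alpha^{j_s}}(\xi)x(\xi)-y_{j_s}(\xi)\bigr).
$$
Setting $g_s(\xi)=\varphi^{\alpha^{j_s}}(\xi)x(\xi)-y_{j_s}(\xi)$, we have $\|g_s\|_{L_p}\le\delta_{j_s}$, and I apply Hölder's inequality (for $1<p<\infty$, in the form $|\sum_s a_{j_s}g_s|\le(\sum_s\theta_{j_s}^{-p'/p}\delta_{j_s}^{p'}|a_{j_s}|^{p'})^{1/p'}(\sum_s\theta_{j_s}\delta_{j_s}^{-p}|g_s|^p)^{1/p}$, obtained by writing $a_{j_s}g_s=(\theta_{j_s}^{-1/p}\delta_{j_s}a_{j_s})(\theta_{j_s}^{1/p}\delta_{j_s}^{-1}g_s)$), then raise to the $p$-th power, integrate over $\mathbb R^d$, use \eqref{u2} pointwise and $\sum_s\theta_{j_s}\int|g_s/\delta_{j_s}|^p\le\sum_s\theta_{j_s}=1$. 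This yields $\|\Lambda_0 x-\Phi(y)\|_{L_p}\le e^{-S(\alpha^0)}$. The cases $p=1$ and $p=\infty$ are analogous, using the dual pairing $(\ell_\infty,\ell_1)$ and $(\ell_1,\ell_\infty)$ respectively against the weights $\theta_{j_s}$. Combined with the lower bound this gives \eqref{Eop} and shows \eqref{t1} is optimal; when $M=\emptyset$ all data indices are absent, $W=\Wt$ alone forces $\|\Lambda_0 x\|\le e^{-S(\alpha^0)}$ by the same estimate with $y$ replaced by $0$, so $\Phi\equiv0$ is optimal.

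\emph{Main obstacle.} The genuinely delicate point is the existence of the weight functions $a_{j_s}(\xi)$ satisfying \eqref{u1}–\eqref{u2} simultaneously for a.e.\ $\xi$ — equivalently, showing that the pointwise optimization problem (minimize the left side of \eqref{u2} subject to \eqref{u1}) has value exactly $e^{-p'S(\alpha^0)}$ (resp.\ $e^{-S(\alpha^0)}$) for a.e.\ $\xi$, with a measurable selector. This is where the hypothesis $\alpha^0\in\co\mathcal A$, the representation \eqref{510th}, and the support-hyperplane structure enter: by homogeneity one reduces to the single point $\wxi$ used in the lower bound, solves the finite-dimensional Hölder-extremal problem there (the extremizer being $a_{j_s}(\xi)=\theta_{j_s}\overline{\varphi^{\alpha^{j_s}}(\xi)}\,|\varphi^{\alpha^{j_s}}(\xi)|^{p'-2}\cdot(\text{normalizing factor})$ up to the correct powers of $\delta$), and checks measurability of the resulting $\xi\mapsto a_{j_s}(\xi)$ via continuity of the $\varphi_i$. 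Verifying that this choice indeed meets \eqref{u1} and turns \eqref{u2} into an equality — i.e.\ that the lower and upper bounds match — is the crux and should occupy the bulk of the argument; everything else is the routine duality bookkeeping sketched above.
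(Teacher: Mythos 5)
Your lower bound and your error estimate for the methods \eqref{t1} follow the paper's route essentially verbatim: the symmetry argument reducing $E$ to the extremal problem \eqref{8}, a bump concentrated near a point $\wxi$ with $|\varphi_j(\wxi)|=e^{-\weta_j}$, and the pointwise H\"older splitting with weights $\theta_{j_s}^{1/p}\delta_{j_s}^{-1}$ (these are exactly the paper's $\wl_{j_s}^{1/p}$ up to the constant $e^{-S(\alpha^0)}$), including the correct treatment of indices $j_s\notin M$ via the constraint rather than the data. One slip there: your normalization of the bump is inverted. Since the hyperplane is supporting, $|\varphi^{\alpha^j}(\wxi)|=e^{-\la\alpha^j,\weta\ra}\le\delta_je^{\wah}$, so admissibility requires the bump to have $L_p$-norm close to $e^{-\wah}$, not $e^{\wah}$; with your scaling you land on $e^{\wah-\la\alpha^0,\weta\ra}=e^{2\wah}e^{-S(\alpha^0)}$ rather than $e^{-S(\alpha^0)}$. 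This is repairable bookkeeping, as is the $\varepsilon$-perturbation needed because $|\varphi^{\alpha^j}|$ is only approximately constant on the bump.

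The genuine gap is the step you yourself call the crux and then leave open: the existence of functions $a_{j_s}\cd$ satisfying \eqref{u1} and \eqref{u2} simultaneously for a.e.\ $\xi$. Without it the upper bound, hence \eqref{Eop}, is not established (and even the $M=\emptyset$ case needs the same fact to bound $\sup_{x\in W}\|\Lambda_0x\|_{\lp}$). Moreover the mechanism you propose, ``by homogeneity one reduces to the single point $\wxi$,'' cannot work: the $\varphi_j$ are only assumed continuous with $(|\varphi_1|,\ldots,|\varphi_k|)$ attaining every point of $(0,\infty)^k$, so \eqref{u2} must be verified at every such value, not only at $\wxi$ (where it holds with equality). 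What actually closes this is a pointwise convexity inequality: with $\wl_{j_s}=\theta_{j_s}e^{p\la\alpha^{j_s}-\alpha^0,\,\weta\ra}$ one needs
\[
|\varphi(\xi)|^{p\alpha^0}\le\sum_{s=1}^l\wl_{j_s}|\varphi(\xi)|^{p\alpha^{j_s}}
\quad\text{for all }\xi\in\mathbb R^d,
\]
which is weighted AM--GM (equivalently, convexity of $\eta\mapsto\sum_{s=1}^l\wl_{j_s}e^{-p\la\alpha^{j_s}-\alpha^0,\,\eta\ra}$, whose value and gradient at $\weta$ vanish precisely because $\sum_s\theta_{j_s}=1$ and $\alpha^0=\sum_s\theta_{j_s}\alpha^{j_s}$). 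Given this, the explicit choice
\[
a_{j_s}(\xi)=\varphi^{\alpha^0}(\xi)\,
\frac{\wl_{j_s}\varphi^{(p/2-1)\alpha^{j_s}}(\xi)\,\ov{\varphi}^{(p/2)\alpha^{j_s}}(\xi)}
{\sum_{s=1}^l\wl_{j_s}|\varphi(\xi)|^{p\alpha^{j_s}}}
\]
satisfies \eqref{u1} by inspection and \eqref{u2} by the displayed inequality; note its modulus is proportional to $\wl_{j_s}|\varphi^{\alpha^{j_s}}(\xi)|^{p-1}$, i.e.\ the exponent is $p-1$, not the $p'-1$ your Lagrange ansatz produces. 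Supplying this construction and verification is not a routine afterthought but the part of the proof that makes the matching upper bound true.
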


\begin{proof}
We prove that
\begin{equation}\label{*}
E(\Lambda_0,\Wt,\delta)\ge\sup_{\substack{x\cd\in\Wt\\\|\Lambda_jx
\cd\|_{\lp}\le\delta_j,\ j=1,\ldots,m}}\|\Lambda_0x\cd\|_{\lp}.
\end{equation}
Indeed, for any method $\Phi\colon(\lt)^m\to\lt$ and any function $x\cd\in\Wt$ such that $\|\Lambda_jx\cd\|_{\lp}\le\delta_j$, $j=1,\ldots,m$, we have
\begin{multline*}
2\|\Lambda_0x\cd\|_{\lp}=\|\Lambda_0x\cd-\Phi(0)\cd-(\Lambda_0(-x\cd)-\Phi(0)\cd)
\|_{\lp}\\
\le\|\Lambda_0x\cd-\Phi(0)\cd\|_{\lp}+\|\Lambda_0(-x\cd)-\Phi(0)\cd\|_{\lp}\\
\le2e(\Lambda_0,\Wt,\delta,\Phi).
\end{multline*}
Due to the arbitrariness of $\Phi$ it follows that
$$\|\Lambda_0x\cd\|_{\lp}\le E(\Lambda_0,\Wt,\delta).$$
Taking the upper bound over all functions $x\cd$ satisfying the given conditions, we obtain inequality \eqref{*}.

The extremal problem in the right-hand side of \eqref{*} may be written in the form
\begin{equation}\label{8}
\||\varphi\cd|^{\alpha^0}x\cd\|_{\lp}\to\max,\quad\||\varphi\cd|^{\alpha^j}x\cd\|_{\lp}\le\delta_j,
\quad j=1,\ldots,N,
\end{equation}
where $|\varphi(\xi)|^\alpha=|\varphi_1(\xi)|^{\alpha_1}\ldots|\varphi_k(\xi)|^{\alpha_k}$ for
$\alpha=(\alpha_1,\ldots,\alpha_k)\in\mathbb R^k_+$.

Let $\xin\in\mathbb R^d$ be such that $|\varphi_j(\xin)|=e^{-\weta_j}$, $j=1,\ldots,k$. Consider the case when $1\le p<\infty$. Due to the continuity of the functions $\varphi_s\cd$, $s=1,\ldots,k$, for any $\varepsilon>0$ and any $j\in\{0,1,\ldots,N\}$ there exist $\td_j$ such that
\begin{equation}\label{td}
\left||\varphi(\xi)|^{p\alpha^j}-|\varphi(\xin)|^{p\alpha^j}\right|\le\varepsilon
\end{equation}
for all $\xi\in B_{\td_j}(\xin)$ where
$$B_\delta(\xi_0)=\{\,\xi\in\mathbb R^d:|\xi-\xi_0|<\delta\,\}.$$
Set $\td=\min\{\td_1,\ldots,\td_N\}$. Then for all $\xi\in B_{\td}(\xin)$ and all $j\in\{0,1,\ldots,N\}$ inequalities \eqref{td} hold.

Put $\wAh=e^{-2\wah}$ and
$$x_{\varepsilon,p}(\xi)=\begin{cases}\left(\dfrac{\wAh}{|B_{\td}(\xin)|\gamma_\varepsilon}
\right)^{1/p},&\xi\in B_{\td}(\xin),\\
0,&\xi\notin B_{\td}(\xin),
\end{cases}$$
where
$$\gamma_\varepsilon=1+\varepsilon\frac{\wAh}{\min_{1\le j\le N}\delta_j^p}$$
($|B_\delta(\xin)|$ denotes the volume of the ball $B_\delta(\xin)$).

We have
$$\iRd|\varphi(\xi)|^{p\alpha^j}|x_{\varepsilon,p}(\xi)|^p\,d\xi\le\frac{\wAh}{\gamma_\varepsilon}
\left(|\varphi(\xin)|^{p\alpha^j}+\varepsilon\right)=\frac{e^{-p(\la\alpha^j,\weta\ra+\wah)}
+\wAh\varepsilon}{\gamma_\varepsilon}.$$
Since $z=\la\alpha,\weta\ra+\wah$ is the equation of support hyperplane of $Q$, we get
$$\la\alpha^j,\weta\ra+\wah\ge\ln1/\delta_j.$$
Consequently,
$$\iRd|\varphi(\xi)|^{p\alpha^j}|x_{\varepsilon,p}(\xi)|^p\,d\xi\le\frac{\delta_j^p+
\wAh\varepsilon}{\gamma_\varepsilon}\le\delta_j^p.$$
Thus, $x_{\varepsilon,p}\cd$ is an admissible function for extremal problem~\eqref{8}. Therefore, taking into account \eqref{*} and \eqref{td}, we obtain
\begin{multline*}
E^p(\Lambda_0,\Wt,\delta)\ge\iRd|\varphi(\xi)|^{p\alpha^0}|x_\varepsilon(\xi)|^p\,d\xi
\ge\frac{\wAh}{\gamma_\varepsilon}
\left(|\varphi(\xin)|^{p\alpha^0}-\varepsilon\right)\\=\frac{e^{-p(\la\alpha^0,\weta\ra+\wah)}
-\wAh\varepsilon}{\gamma_\varepsilon}.
\end{multline*}
Making $\varepsilon$ tends to zero, we have
$$E(\Lambda_0,\Wt,\delta)\ge e^{-(\la\alpha^0,\weta\ra+\wah)}= e^{-S(\alpha^0)}.$$

Now let $p=\infty$. Similar to the previous reasoning, for any $\varepsilon>0$ there exists $\td>0$ such that for all $\xi\in B_{\td}(\xin)$ and all $j\in\{0,1,\ldots,N\}$ inequalities
$$\left||\varphi(\xi)|^{\alpha^j}-|\varphi(\xin)|^{\alpha^j}\right|\le\varepsilon$$
hold.

Put
$$x_{\varepsilon,\infty}(\xi)=\begin{cases}\wAh\tgg_\varepsilon^{-1}
,&\xi\in B_{\td}(\xin),\\
0,&\xi\notin B_{\td}(\xin),
\end{cases}$$
where
$$\tgg_\varepsilon=1+\varepsilon\frac{\wAh}{\min_{1\le j\le N}\delta_j}$$

We have
$$\||\varphi\cd|^{\alpha^j}x_{\varepsilon,\infty}\cd\|_{\li}\le\frac{\wAh}{\tgg_\varepsilon}
\left(|\varphi(\xin)|^{p\alpha^j}+\varepsilon\right)=\frac{e^{-(\la\alpha^j,\weta\ra+\wah)}
+\wAh\varepsilon}{\tgg_\varepsilon}\le\delta_j.$$
Consequently, $x_{\varepsilon,\infty}\cd$ is an admissible function for extremal problem~\eqref{8}. Thus,
\begin{multline*}
E(\Lambda_0,\Wt,\delta)\ge\||\varphi\cd|^{\alpha^0}x_{\varepsilon,\infty}\cd\|_{\li}
\ge\frac{\wAh}{\tgg_\varepsilon}
\left(|\varphi(\xin)|^{\alpha^0}-\varepsilon\right)\\=\frac{e^{-(\la\alpha^0,\weta\ra+\wah)}
-\wAh\varepsilon}{\tgg_\varepsilon}.
\end{multline*}
Making $\varepsilon$ tends to zero, we have
\begin{equation}\label{Er}
E(\Lambda_0,\Wt,\delta)\ge e^{-(\la\alpha^0,\weta\ra+\wah)}= e^{-S(\alpha^0)}.
\end{equation}

Now we prove the optimality of the recovery methods \eqref{t1}. To estimate the error of methods \eqref{t1} we consider the following extremal problem
\begin{multline}\label{ff}
\biggl\|\varphi^{\alpha^0}\cd x\cd-\sum_{j\in M}a_j\cd y_j\cd\biggr\|_{\lp}\to\max,\\
\|\varphi^{\alpha^j}\cd x\cd-y_j\cd\|_{\lp}\le\delta_j,\ j=1,\ldots,m,\\
\|\varphi^{\alpha^j}\cd x\cd\|_{\lp}\le\delta_j,\ j=m+1,\ldots,N.
\end{multline}
For $M=\emptyset$, the corresponding sums are considered equal to zero.

Put $z_j(\xi)=\varphi^{\alpha^j}(\xi)x(\xi)-y_j(\xi)$, $j=1,\ldots,m$, and $$\omega(\xi)=\varphi^{\alpha^0}(\xi)-\sum_{j\in M}\varphi^{\alpha^j}(\xi)a_j(\xi).$$
Then \eqref{ff} may be written in the form
\begin{multline*}
\biggl\|\omega(\xi)x(\xi)+\sum_{j\in M}a_j(\xi)z_j(\xi)\biggr\|_{\lp}\to\max,\\
\|z_j(\xi)\|_{\lp}\le\delta_j,\ j=1,\ldots,m,\\
\|\varphi^{\alpha^j}\cd x\cd\|_{\lp}\le\delta_j,\ j=m+1,\ldots,N.
\end{multline*}
It follows from \eqref{u1} that
$$\omega(\xi)=\sum_{j\in\{j_1,\ldots,j_l\}\setminus M}\varphi^{\alpha^j}(\xi)a_j(\xi)x(\xi).$$
Thus, we have to estimate the value of the following problem
\begin{multline*}
\biggl\|\sum_{j\in\{j_1,\ldots,j_l\}\setminus M}\varphi^{\alpha^j}(\xi)a_j(\xi)x(\xi)+\sum_{j\in M}a_j(\xi)z_j(\xi)\biggr\|_{\lp}\to\max,\\
\|z_j(\xi)\|_{\lp}\le\delta_j,\ j=1,\ldots,m,\\
\|\varphi^{\alpha^j}\cd x\cd\|_{\lp}\le\delta_j,\ j=m+1,\ldots,N.
\end{multline*}

Let $1\le p<\infty$. Set
$$\wl_{j_s}=\theta_{j_s}e^{p\la\alpha^{j_s}-\alpha^0,\,\weta\ra},\quad s=1,\ldots,l,$$
Consider the case when $1<p<\infty$. Then by H\"older's inequality we have
\begin{multline*}
\biggl|\sum_{j\in\{j_1,\ldots,j_l\}\setminus M}\varphi^{\alpha^j}(\xi)a_j(\xi)x(\xi)+\sum_{j\in M}a_j(\xi)z_j(\xi)\biggr|\\
=\biggl|\sum_{j\in\{j_1,\ldots,j_l\}\setminus M}\frac{a_j(\xi)}{\wl_j^{1/p}}\wl_j^{1/p}\varphi^{\alpha^j}(\xi)x(\xi)+\sum_{j\in M}\frac{a_j(\xi)}{\wl_j^{1/p}}\wl_j^{1/p}z_j(\xi)\biggr|\\
\le Q_p(\xi)\biggl(\sum_{j\in\{j_1,\ldots,j_l\}\setminus M}\wl_j|\varphi(\xi)|^{p\alpha^j}|x(\xi)|^p+\wl_j|z_j(\xi)|^p\biggr)^{1/p},
\end{multline*}
where
$$Q_p(\xi)=\biggl(\sum_{s=1}^l\frac{|a_{j_s}(\xi)|^{p'}}{\wl_{j_s}^{p'/p}}\biggr)^{1/p'}.$$

In view of equalities
$$\ln1/\delta_{j_s}=\la\alpha^{j_s},\weta\ra+\wa,\quad s=1,\ldots,l,\quad S(
\alpha^0)=\la\alpha^0,\weta\ra+\wa$$ we obtain
$$\wl_{j_s}=\theta_{j_s}e^{p\la\alpha^{j_s}-\alpha^0,\,\weta\ra}=\theta_{j_s}e^
{p(\ln1/\delta_{j_s}-S(\alpha^0))}=\frac{\theta_{j_s}}{\delta_{j_s}^p}e^{-pS(\alpha^0)}.$$
Hence,
\begin{equation}\label{QQ}
Q_p(\xi)=\biggl(e^{p'S(\alpha^0)}\sum_{s=1}^l\frac{\delta_{j_s}^{p'}|a_{j_s}(\xi)|^{p'}}
{\theta_{j_s}^{p'/p}}\biggr)^{1/p'}.
\end{equation}
It follows from \eqref{u2} that $Q_p(\xi)\le1$ almost for all $\xi\in\mathbb R^d$. Therefore, we get
\begin{multline*}
\biggl\|\sum_{j\in\{j_1,\ldots,j_l\}\setminus M}\varphi^{\alpha^j}(\xi)a_j(\xi)x(\xi)+\sum_{j\in M}a_j(\xi)z_j(\xi)\biggr\|_{\lp}^p\\
\le\sum_{j\in\{j_1,\ldots,j_l\}\setminus M}\wl_j\iRd\varphi|(\xi)|^{p\alpha^j}|x(\xi)|^p\,d\xi+\sum_{j\in M}\wl_j\iRd|z_j(\xi)|^p\,d\xi\\
\le\sum_{s=1}^l\wl_{j_s}\delta_{j_s}^p=\sum_{s=1}^l\theta_{j_s}e^{-pS(\alpha^0)}=e^{-pS(\alpha^0)}.
\end{multline*}
Consequently,
$$e(\Lambda_0,\Wt,\delta,\Phi)\le e^{-S(\alpha^0)}.$$
Taking into account \eqref{Er}, it means that methods \eqref{t1} are optimal and equality \eqref{Eop} holds.

If $p=1$ we use the inequality
\begin{multline*}
\biggl|\sum_{j\in\{j_1,\ldots,j_l\}\setminus M}\varphi^{\alpha^j}(\xi)a_j(\xi)x(\xi)+\sum_{j\in M}a_j(\xi)z_j(\xi)\biggr|\\
\le Q_1(\xi)\biggl(\sum_{j\in\{j_1,\ldots,j_l\}\setminus M}\wl_j|\varphi(\xi)|^{\alpha^j}|x(\xi)|+\wl_j|z_j(\xi)|\biggr),
\end{multline*}
where
$$Q_1(\xi)=\max_{1\le s\le l}\frac{|a_{j_s}(\xi)|}{\wl_{j_s}}.$$
By analogy with \eqref{QQ} we get
$$Q_1(\xi)=e^{S(\alpha^0)}\max_{1\le s\le l}\frac{\delta_{j_s}|a_{j_s}(\xi)|}
{\theta_{j_s}}.$$
Using the same arguments as for the case $1<p<\infty$, we obtain the assertion of the theorem in the case under consideration.

For $p=\infty$ we use the inequality
\begin{multline*}
\biggl\|\sum_{j\in\{j_1,\ldots,j_l\}\setminus M}\varphi^{\alpha^j}(\xi)a_j(\xi)x(\xi)+\sum_{j\in M}a_j(\xi)z_j(\xi)\biggr\|_{\li}\\
\le\biggl\|\sum_{s=1}^l|a_{j_s}(\xi)|\delta_{j_s}\biggr\|_{\li}.
\end{multline*}
It follows from \eqref{u2} that
$$e(\Lambda_0,W_\infty^{\mathcal A}(\mathbb R^d,\ov\delta),\delta,\Phi)\le e^{-S(\alpha^0)}.$$
Consequently, using \eqref{Er}, we obtain that methods \eqref{t1} are optimal and \eqref{Eop} holds.

It remains to show that the set of functions $\alpha_{j_s}\cd$, $s=1,\ldots,l$, satisfying conditions \eqref{u1} and \eqref{u2} is nonempty. Let $1\le p<\infty$. Consider the function
$$f(\eta)=-1+\sum_{s=1}^l\wl_{j_s}e^{-p\la\alpha^{j_s}-\alpha^0,\,\eta\ra},\quad\eta\in\mathbb R^k .$$
This is obviously a convex function, and it is easy to verify that $f(\weta)=0$ and the derivative of this function at the point $\weta$ is also zero. It follows that $f(\eta)\ge0$ for all $\eta\in\mathbb R^k$. Consequently,
$$f_1(\eta)=e^{-p\la\alpha^0,\,\eta\ra}f(\eta)\ge0$$
for all $\eta\in\mathbb R^k$. Putting $e^{-\eta_j}=|\varphi_j(\xi)|$,
$j=1,\ldots,k$, we obtain that
$$g(\xi)=-|\varphi(\xi)|^{p\alpha^0}+\sum_{s=1}^l\wl_{j_s}|\varphi(\xi)|^{p\alpha^{j_s}}\ge0$$
for all $\xi\in\mathbb R^d$. Thus,
$$\frac{|\varphi(\xi)|^{p\alpha^0}}
{\sum_{s=1}^l\wl_{j_s}|\varphi(\xi)|^{p\alpha^{j_s}}}\le1.$$

Set
$$a_{j_s}(\xi)=\varphi^{\alpha^0}(\xi)\frac{\wl_{j_s}\varphi^{(p/2-1)\alpha^{j_s}}(\xi)
\ov{\varphi}^{p/2\alpha^{j_s}}(\xi)}
{\sum_{s=1}^l\wl_{j_s}|\varphi(\xi)|^{p\alpha^{j_s}}},\quad s=1,\ldots,l.$$
It is easy to check that condition \eqref{u1} is valid. If $1<p<\infty$, then
\begin{multline*}
\sum_{s=1}^l\frac{\delta_{j_s}^{p'}|a_{j_s}(\xi)|^{p'}}{\theta_{j_s}^{p'/p}}
=e^{-p'S(\alpha^0)}\sum_{s=1}^l\frac{|a_{j_s}(\xi)|^{p'}}{\lambda_{j_s}^{p'/p}}\\
=e^{-p'S(\alpha^0)}\left(\frac{|\varphi(\xi)|^{p\alpha^0}}
{\sum_{s=1}^l\wl_{j_s}|\varphi(\xi)|^{p\alpha^{j_s}}}\right)^{p'-1}\le e^{-p'S(\alpha^0)}.
\end{multline*}
If $p=1$, then
$$\dfrac{\delta_{j_s}|a_{j_s}(\xi)|}{\theta_{j_s}}=e^{-S(\alpha^0)}\frac{|\varphi(\xi)|^{\alpha^0}}
{\sum_{s=1}^l\wl_{j_s}|\varphi(\xi)|^{\alpha^{j_s}}}\le e^{-S(\alpha^0)},\quad s=1,\ldots,l.$$

If $p=\infty$, we set
$$a_{j_s}(\xi)=\varphi^{\alpha^0}(\xi)\frac{\wl_{j_s}e^{-i\arg\{\varphi^{\alpha^{j_s}}(\xi)\}}}
{\sum_{s=1}^l\wl_{j_s}|\varphi(\xi)|^{\alpha^{j_s}}},\quad s=1,\ldots,l,$$
where
$$\wl_{j_s}=\frac{\theta_{j_s}}{\delta_{j_s}}e^{-S(\alpha^0)},\quad s=1,\ldots,l.$$
Then condition \eqref{u1} is obviously satisfied, and
$$\sum_{s=1}^l|a_{j_s}(\xi)|\delta_{j_s}=e^{-S(\alpha^0)}\frac{|\varphi(\xi)|^{\alpha^0}}
{\sum_{s=1}^l\wl_{j_s}|\varphi(\xi)|^{\alpha^{j_s}}}\le e^{-S(\alpha^0)}.$$
\end{proof}

Note that it follows from the proof of Theorem~\ref{T1} that
\begin{multline}\label{EI}
\sup_{\substack{x\cd\in\Wt\\\|\Lambda_jx
\cd\|_{\lp}\le\delta_j,\ j=1,\ldots,m}}\|\Lambda_0x\cd\|_{\lp}=e^{-S(\alpha^0)}=\prod_{s=1}^l
\delta_{j_s}^{\theta_{j_s}}\\
=\min\biggl\{\prod_{j=1}^N
\delta_j^{\theta_j}:\theta_j\ge0,\ j=1,\ldots,N,\ \sum_{j=1}^N\theta_j=1,\ \alpha^0=\sum_{j=1}^N\theta_j\alpha^j\biggr\}.
\end{multline}

\section{Exact Carlson type inequalities}

The Carlson inequality \cite{Ca}
\begin{equation*}
\|x(t)\|_{L_1(\mathbb R_+)}\le\sqrt\pi\|x(t)\|_{L_2(\mathbb R_+)}^{1/2}
\|tx(t)\|_{L_2(\mathbb R_+)}^{1/2},\quad\mathbb R_+=[0,+\infty),
\end{equation*}
was generalized by many authors (see \cite{Le,An,Ar2,B,Lu,Os,Os21}). In \cite{Os} we found exact constants in inequalities of the form
\begin{equation}\label{K}
\|w\cd x\cd\|_{\lqq}\le K\|w_0\cd x\cd\|_{\lpp)}^\gamma\|w_1\cd x\cd\|_{\lrr}^{1-\gamma},
\end{equation}
where $T$ is a cone in a linear space, $w\cd$, $w_0\cd$, and $w_1\cd$ are homogenous functions, $\mu$ is a homogenous measure, and $1\le q<p,r<\infty$ (for $T=\mathbb R^d$ the exact inequality was obtained in \cite{B}). Recall that a constant $K$ is called exact if it cannot be replaced by a smaller value. The inequality in this case is called exact.

Weighted inequalities and inequalities for derivatives do not always have a multiplicative form. For example, for analytic and bounded in the strip $S_\beta=\{z\in\mathbb C:|\IM z|<\beta\}$ functions $x(t)\not\equiv0$ the inequality 
\begin{multline*}
\|x'\cd\|_{C(\mathbb R)}\le\frac1{2\beta}\|x\cd\|_{C(\mathbb R)}\|x\cd\|_{C(S_\beta)}^2\\
\times\int_0^{\pi/2}\frac{dt}{\sqrt{\|x\cd\|_{C(S_\beta)}^4\cos^2t+\|x\cd\|_{C(\mathbb R)}^4\sin^2t}}
\end{multline*}
holds (see~\cite[с.~177]{Os1}).

In this regard, it becomes necessary to use a more general definition of exact inequalities. Let $X$ be a linear space, $Y_0,Y_1,\ldots,Y_N$ be normed linear spaces, and
$\Lambda_j\colon X\to Y_j$, $j=0,1,\ldots,N$, be linear operators. We say that the inequality
\begin{equation}\label{Lam}
\|\Lambda_0x\|_{Y_0}\le\kappa(\Lambda x),\quad\Lambda x=(\|\Lambda_1x\|_{Y_1},\ldots,
\|\Lambda_Nx\|_{Y_N}),\quad\kappa\colon\mathbb R^N_+\to\mathbb R_+,
\end{equation}
is exact, if it is fulfilled for all $x\in X$ and there is no such $x_0\in X$ for which
$$\sup_{\substack{x\in X\\\|\Lambda_jx\|_{Y_j}\le\|\Lambda_jx_0\|_{Y_j},\ j=1,\ldots,N}}\|\Lambda_0x\|_{Y_0}<\kappa(\Lambda x_0).$$

\begin{proposition}\label{P1}
Let $\delta=(\delta_1,\ldots,\delta_N)\in\mathbb R^N_+$. Set
$$\kappa(\delta)=\sup_{\substack{x\in X\\\|\Lambda_jx\|_{Y_j}\le\delta_j,\ j=1,\ldots,N}}\|\Lambda_0x\|_{Y_0}.$$
Then inequality \eqref{Lam} is exact.
\end{proposition}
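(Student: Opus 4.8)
The plan is to verify the two requirements in the definition of exactness directly from the definition of $\kappa(\delta)$. First I would check that \eqref{Lam} holds for every $x\in X$ with this choice of $\kappa$. Fix $x\in X$ and set $\delta_j=\|\Lambda_jx\|_{Y_j}$ for $j=1,\ldots,N$, so that $\Lambda x=(\delta_1,\ldots,\delta_N)$. Then $x$ itself is an admissible element in the supremum defining $\kappa(\delta)$, since it trivially satisfies $\|\Lambda_jx\|_{Y_j}\le\delta_j$ for all $j$. Hence $\|\Lambda_0x\|_{Y_0}\le\kappa(\delta)=\kappa(\Lambda x)$, which is precisely \eqref{Lam}. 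Note one should remark that if some $\delta_j=0$ the supremum is still taken over a nonempty set (it contains $x$), so $\kappa(\delta)$ is well defined, possibly $+\infty$; the inequality remains valid in that case as well.

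Next I would establish that no improvement is possible, i.e. that there is no $x_0\in X$ with
$$\sup_{\substack{x\in X\\\|\Lambda_jx\|_{Y_j}\le\|\Lambda_jx_0\|_{Y_j},\ j=1,\ldots,N}}\|\Lambda_0x\|_{Y_0}<\kappa(\Lambda x_0).$$
But this is immediate: for any $x_0\in X$, writing $\delta_j^0=\|\Lambda_jx_0\|_{Y_j}$, the left-hand side of the displayed inequality is by definition exactly $\kappa(\delta^0)=\kappa(\Lambda x_0)$. Thus the strict inequality can never hold for any $x_0$, which is exactly the assertion that \eqref{Lam} is exact.

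There is essentially no obstacle here: the statement is a tautological unwinding of the definitions, the only subtlety being to keep track of the case where some coordinate $\delta_j$ vanishes and to observe that the supremum set is always nonempty because it contains the point at which one evaluates. I would present the argument in two short paragraphs corresponding to the two clauses of the definition of exactness.
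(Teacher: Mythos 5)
Your argument is correct and is essentially identical to the paper's own proof: both parts follow by direct unwinding of the definition of $\kappa$, with the second clause holding because the supremum in the exactness condition is by definition equal to $\kappa(\Lambda x_0)$. The extra remark about vanishing $\delta_j$ is a harmless refinement not present in the paper.
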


\begin{proof}
For $x\in X$ we put
$$\delta=(\|\Lambda_1x\|_{Y_1},\ldots,\|\Lambda_Nx\|_{Y_N}).$$
From the definition of $\kappa\cd$ inequality \eqref{Lam} holds. Assume that it is not exact. Then there is an element $x_0\in X$ for which
$$\sup_{\substack{x\in X\\\|\Lambda_jx\|_{Y_j}\le\delta_j^0,\ j=1,\ldots,N}}\|\Lambda_0x\|_{Y_0}<\kappa(\delta^0),$$
where
$$\delta^0=(\delta_1^0,\ldots,\delta_N^0)=(\|\Lambda_1x_0\|_{Y_1},\ldots,
\|\Lambda_Nx_0\|_{Y_N}).$$
This contradicts the definition of $\kappa\cd$.
\end{proof}

The solution of extremal problem \eqref{EI} allows us to obtain new exact inequalities of Carlson type.

Let $\varphi\cd=(\varphi_1\cd,\ldots,\varphi_k\cd)$. Assume that $\varphi_1\cd,\ldots,\varphi_k\cd$ are continuous on $\mathbb R^d$ and for any $a_1,\ldots,a_k>0$ there exists $\wxi\in\mathbb R^d$ for which $|\varphi_j(\wxi\,)|=a_j$, $j=1,\ldots,k$. It follows from Proposition~\ref{P1} and \eqref{EI} that for $\alpha^0\in\co\{\alpha^1,\ldots,\alpha^N\}$ we have the exact inequality
\begin{multline*}
\|\varphi^{\alpha^0}\cd x\cd\|_{\lp}\le\min\biggl\{\prod_{j=1}^N
\|\varphi^{\alpha^j}\cd x\cd\|_{\lp}^{\theta_j}:\\
\theta_j\ge0,\ j=1,\ldots,N,\ \sum_{j=1}^N\theta_j=1,\ \alpha^0=\sum_{j=1}^N\theta_j\alpha^j,\biggr\}.
\end{multline*}

In particular, for $\varphi(\xi)=i\xi$ and $\alpha^0\in\co\{\alpha^1,\ldots,\alpha^N\}$ we obtain the exact inequality
\begin{multline*}
\||\xi|^{\alpha^0}x(\xi)\|_{\lp}\le\min\biggl\{\prod_{j=1}^N
\||\xi|^{\alpha^j}x(\xi)\|_{\lp}^{\theta_j}:\\
\theta_j\ge0,\ j=1,\ldots,N,\ \sum_{j=1}^N\theta_j=1,\ \alpha^0=\sum_{j=1}^N\theta_j\alpha^j \biggr\}.
\end{multline*}

Consider one more example. Let
\begin{equation}\label{psi}
\varphi(\xi)=\psi_\theta(\xi)=\left(|\xi_1|^\theta+\ldots+|\xi_d|^\theta\right)^{2/\theta},
\quad\theta>0.
\end{equation}
In this case $k=1$, $Q$ is a convex set on $\mathbb R^2$, and $S\cd$ is a broken line. For $\alpha^0\in\co\{\alpha^1,\ldots,\alpha^N\}$ we have the exact inequality
\begin{multline*}
\|\psi_\theta^{\alpha^0}\cd x\cd\|_{\lp}
\le\min\biggl\{\|\psi_\theta^{\alpha^{j_1}}\cd x\cd\|_{\lp}^\lambda
\|\psi_\theta^{\alpha^{j_2}}\cd x\cd\|_{\lp}^{1-\lambda}:\\0\le\lambda\le1,\quad
\alpha^0=\lambda\alpha^{j_1}+(1-\lambda)\alpha^{j_2}\biggr\}.
\end{multline*}

\section{Recovery of differential operators and exact inequalities}

Using notation \eqref{1}, we set
$$\mathcal W_F^{\mathcal A}(\mathbb R^d)=\left\{\,x\cd\in\lt:\varphi^{\alpha^j}\cd Fx\cd\in\lt,\,j=1,\ldots,N\,\right\},$$
where $Fx\cd$ is the Fourier trancform of $x\cd$. Define operators $\Lambda_j\colon\mathcal W_F^{\mathcal A}(\mathbb R^d)\to\lt$, $j=0,1,\ldots,N$, as follows
\begin{equation}\label{Ve}
\Lambda_jx\cd=F^{-1}(\varphi^{\alpha^j}\cd Fx\cd)\cd,\quad j=0,1,\ldots,N.
\end{equation}

Consider problem \eqref{n1} of optimal recovery of $\Lambda_0$ on the set
\begin{multline*}
\WFt=\{\,x\cd\in\mathcal W_F^{\mathcal A}(\mathbb R^d):\|\Lambda_jx\cd\|_{\lt}\le\delta_j,\ \delta_j>0,\\
j=m+1,\ldots,N\,\}
\end{multline*}
by values of $\Lambda_1,\ldots,\Lambda_m$ given with errors.

Passing to Fourier transforms, we have
\begin{multline*}
e(\Lambda_0,\WFt,\delta,\Phi)\\
=\frac1{(2\pi)^d}\sup_{\substack{x\cd\in\WFt,\ y=(y_1,\ldots,y_m)\in (\lt)^m\\\frac1{(2\pi)^d}\|\varphi^{\alpha^j}\cd Fx\cd-Fy_j\cd\|_{Y_j}\le\delta_j,\ j=1,\ldots,m}}\|\varphi^{\alpha^0}\cd Fx\cd\\
-F(\Phi(y))\cd\|_{\lt}.
\end{multline*}
Putting
$$z\cd=\frac1{(2\pi)^d}Fx\cd,\quad z_j\cd=\frac1{(2\pi)^d}Fy_j\cd\quad j=1,\ldots,m,$$
it is easy to verify that the problem under consideration is reduced to the one considered earlier in Theorem~\ref{T1} for $p=2$.

\begin{theorem}\label{T2}
Let the conditions of Theorem~$\ref{T1}$ be satisfied with respect to the functions $\varphi_j\cd$, $j=1,\ldots,k$, and $\alpha^0\in\co\mathcal A$. Then
$$E(\Lambda_0,\WFt,\delta)=e^{-S(\alpha^0)}.$$
If $M\ne\emptyset$, then all methods of the form
$$\Phi(y\cd)\cd=F^{-1}\biggl(\sum_{j\in M}a_j\cd Fy_j\cd\biggr),$$
where functions $a_{j_s}\cd$, $s=1,\ldots,l$, satisfy the conditions
\begin{gather*}
\sum_{s=1}^l\varphi^{\alpha^{j_s}}(\xi)a_{j_s}(\xi)=\varphi^{\alpha^0}(\xi),\\
\sum_{s=1}^l\dfrac{\delta_{j_s}^2|a_{j_s}(\xi)|^2}
{\theta_{j_s}}\le
e^{-2S(\alpha^0)}
\end{gather*}
for almost all $\xi\in\mathbb R^d$, are optimal.

If $M=\emptyset$, then the method $\Phi(y\cd)\cd=0$ is optimal.

The exact inequality
$$\|\Lambda_0\cd x\cd\|_{\lt}\le\prod_{s=1}^l\|\Lambda_{j_s}\cd x\cd\|_{\lt}^{\theta_{j_s}}$$
holds.
\end{theorem}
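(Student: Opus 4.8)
The plan is to deduce the whole statement from Theorem~\ref{T1} with $p=2$, combined with Proposition~\ref{P1} and \eqref{EI}; the Fourier transform is the only new ingredient. The text preceding the theorem has already performed the change of variables $z\cd=(2\pi)^{-d}Fx\cd$, $z_j\cd=(2\pi)^{-d}Fy_j\cd$, and I would make this precise as follows. Under it the multiplier operators \eqref{Ve} turn into the multiplication operators $z\mapsto\varphi^{\alpha^j}\cd z\cd$ on $\lt$, one has $\frac1{(2\pi)^d}\|\varphi^{\alpha^j}\cd Fx\cd-Fy_j\cd\|_{\lt}=\|\varphi^{\alpha^j}\cd z\cd-z_j\cd\|_{\lt}$ and $\frac1{(2\pi)^d}\|\varphi^{\alpha^0}\cd Fx\cd-F(\Phi(y))\cd\|_{\lt}=\|\varphi^{\alpha^0}\cd z\cd-\widetilde\Phi(z_1,\dots,z_m)\cd\|_{\lt}$ with $\widetilde\Phi(z_1,\dots,z_m)=\frac1{(2\pi)^d}F(\Phi(y))$, and the set $\WFt$ (with its $\ov\delta$-constraints) is carried bijectively onto $W_2^{\mathcal A}(\mathbb R^d,\ov\delta)$ with the \emph{same} $\ov\delta$. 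Hence $e(\Lambda_0,\WFt,\delta,\Phi)$ equals the error, in the setting of Theorem~\ref{T1} with $p=2$, of the method $\widetilde\Phi$, so $E(\Lambda_0,\WFt,\delta)$ equals the error of optimal recovery there, which by \eqref{Eop} is $e^{-S(\alpha^0)}$.

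Once the reduction is in place, the remaining assertions specialize mechanically. Taking $p=2$ gives $p'=2$, so the admissibility conditions \eqref{u1}, \eqref{u2} on the multipliers $a_{j_s}\cd$ collapse to $\sum_{s=1}^l\varphi^{\alpha^{j_s}}(\xi)a_{j_s}(\xi)=\varphi^{\alpha^0}(\xi)$ and $\sum_{s=1}^l\delta_{j_s}^2|a_{j_s}(\xi)|^2/\theta_{j_s}\le e^{-2S(\alpha^0)}$ for almost all $\xi$, exactly the form stated; and a method of type \eqref{t1} in the $z$-picture, $\widetilde\Phi(z_1,\dots,z_m)=\sum_{j\in M}a_j\cd z_j\cd$, corresponds, on undoing the change of variables via $z_j=(2\pi)^{-d}Fy_j$, to $\Phi(y)=F^{-1}\bigl(\sum_{j\in M}a_j\cd Fy_j\cd\bigr)$, whose optimality is thus given by Theorem~\ref{T1}. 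The last part of Theorem~\ref{T1} guarantees that such $a_{j_s}\cd$ exist, and the case $M=\emptyset$ with $\Phi=0$ is inherited verbatim.

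For the exact inequality I would apply Proposition~\ref{P1} to the operators \eqref{Ve}. The same Fourier reduction turns \eqref{EI} with $p=2$ into
$$\kappa(\delta_1,\dots,\delta_N):=\sup_{\substack{u\cd\in\mathcal W_F^{\mathcal A}(\mathbb R^d)\\\|\Lambda_ju\cd\|_{\lt}\le\delta_j,\ j=1,\dots,N}}\|\Lambda_0u\cd\|_{\lt}=e^{-S(\alpha^0)}=\prod_{s=1}^l\delta_{j_s}^{\theta_{j_s}},$$
where $(j_s,\theta_{j_s})$ is the decomposition \eqref{510th} associated via the support hyperplane at $\alpha^0$. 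Evaluating this at $\delta_j=\|\Lambda_jx\cd\|_{\lt}$ yields $\|\Lambda_0x\cd\|_{\lt}\le\prod_{s=1}^l\|\Lambda_{j_s}x\cd\|_{\lt}^{\theta_{j_s}}$, and Proposition~\ref{P1} shows this inequality is exact; the fact that this particular product equals $\min\{\prod_{j=1}^N\|\Lambda_jx\cd\|_{\lt}^{\theta_j}:\theta_j\ge0,\ \sum_j\theta_j=1,\ \alpha^0=\sum_j\theta_j\alpha^j\}$ is again the content of \eqref{EI}, so the stated form is the sharp one. I expect the only point requiring genuine care to be the accounting of the $(2\pi)^d$ normalization factors, so that no spurious constant survives and the reduced problem is \emph{literally} an instance of Theorem~\ref{T1}; all the rest is inherited.
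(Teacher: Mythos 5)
Your proposal is correct and follows essentially the same route as the paper, which proves Theorem~\ref{T2} precisely by the Fourier--Plancherel change of variables $z\cd=(2\pi)^{-d}Fx\cd$, $z_j\cd=(2\pi)^{-d}Fy_j\cd$ reducing everything to Theorem~\ref{T1} with $p=2$ (where $p'=2$ collapses \eqref{u2} to the stated quadratic condition), and obtains the exact inequality from Proposition~\ref{P1} together with \eqref{EI}. Your write-up merely makes explicit the bookkeeping of the normalization constants and the bijection between methods $\Phi$ and $\widetilde\Phi$, which the paper leaves as ``it is easy to verify.''
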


For $\varphi(\xi)=i\xi$ the operators $\Lambda_j$ defined by \eqref{Ve} are the Weyl derivatives of orders $\alpha^j$ which are denoted by $D^{\alpha^j}$. Thus, it follows from Theorem~\ref{T2} the following result.

\begin{corollary}[\cite{Os1}, Theorem~5.19]
Let $\alpha^0\in\co\mathcal A$. Then
$$E(D^{\alpha^0},\WFt,\delta)=e^{-S(\alpha^0)}.$$
If $M\ne\emptyset$, then all methods of the form
$$\Phi(y\cd)\cd=F^{-1}\biggl(\sum_{j\in M}a_j\cd Fy_j\cd\biggr),$$
where functions $a_{j_s}\cd$, $s=1,\ldots,l$, satisfy the conditions
\begin{gather*}
\sum_{s=1}^l(i\xi)^{\alpha^{j_s}}a_{j_s}(\xi)=(i\xi)^{\alpha^0},\\
\sum_{s=1}^l\dfrac{\delta_{j_s}^2|a_{j_s}(\xi)|^2}
{\theta_{j_s}}\le
e^{-2S(\alpha^0)}
\end{gather*}
for almost all $\xi\in\mathbb R^d$, are optimal.

If $M=\emptyset$, then the method $\Phi(y\cd)\cd=0$ is optimal.

The exact inequality
$$\|D^{\alpha^0}x\cd\|_{\lt}\le\prod_{s=1}^l\|D^{\alpha^{j_s}}x\cd\|_{\lt}^{\theta_{j_s}}$$
holds.
\end{corollary}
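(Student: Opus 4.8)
The plan is to reduce Theorem~\ref{T2} to Theorem~\ref{T1} by the Plancherel isometry, which identifies $\lt$ with itself (up to the factor $(2\pi)^{-d}$) via the Fourier transform, and turns the convolution-type operators $\Lambda_j=F^{-1}(\varphi^{\alpha^j}\cdot F\,\cdot)$ into the pure multiplication operators treated in Theorem~\ref{T1}. First I would make the change of variables $z\cd=(2\pi)^{-d}Fx\cd$, $z_j\cd=(2\pi)^{-d}Fy_j\cd$, as indicated in the paragraph preceding the statement, and check that this is a bijection of $\WFt$ onto the set $W_2^{\mathcal A}(\mathbb R^d,\ov\delta)$ of Theorem~\ref{T1}: indeed $\|\Lambda_jx\cd\|_{\lt}=\|\varphi^{\alpha^j}\cd z\cd\|_{\lt}$ by Plancherel, so the constraints $\|\Lambda_jx\cd\|_{\lt}\le\delta_j$ transcribe exactly into the constraints defining $W$ in Theorem~\ref{T1} with $p=2$. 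Likewise the error functional $\|\Lambda_0x\cd-\Phi(y)\cd\|_{\lt}$ becomes $\|\varphi^{\alpha^0}\cd z\cd-(2\pi)^{-d}F\Phi(y)\cd\|_{\lt}$, and a recovery method $\Phi$ on the $y$-side corresponds to a recovery method $\widetilde\Phi$ on the $z$-side via $\widetilde\Phi(z_1,\ldots,z_m)=(2\pi)^{-d}F\Phi((2\pi)^dF^{-1}z_1,\ldots)$, this correspondence being a bijection of admissible methods preserving the error.

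Next I would invoke Theorem~\ref{T1} with $p=2$ (so $p'=2$ and the middle case of \eqref{u2} reads $\sum_s\delta_{j_s}^2|a_{j_s}(\xi)|^2/\theta_{j_s}\le e^{-2S(\alpha^0)}$, which is precisely the condition displayed in Theorem~\ref{T2}) to conclude $E(\Lambda_0,\WFt,\delta)=E(\Lambda_0,W_2^{\mathcal A}(\mathbb R^d,\ov\delta),\delta)=e^{-S(\alpha^0)}$, and that the optimal methods on the $z$-side are exactly those of the form $\widetilde\Phi(z\cd)\cd=\sum_{j\in M}a_j\cd z_j\cd$ with the $a_{j_s}$ satisfying \eqref{u1} and the $p=2$ case of \eqref{u2}. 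Transcribing $\widetilde\Phi$ back through the Fourier transform gives $\Phi(y\cd)\cd=F^{-1}(\sum_{j\in M}a_j\cd Fy_j\cd)$ after absorbing the $(2\pi)^{\pm d}$ factors, which are consistent because each term is linear in $y_j$; the case $M=\emptyset$ giving the zero method, and the nonemptiness of the family of admissible $a_{j_s}$, both come along for free from the corresponding parts of Theorem~\ref{T1}.

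Finally, the exact inequality $\|\Lambda_0x\cd\|_{\lt}\le\prod_{s=1}^l\|\Lambda_{j_s}x\cd\|_{\lt}^{\theta_{j_s}}$ follows by combining Proposition~\ref{P1} with \eqref{EI}: applying Plancherel termwise converts \eqref{EI} (for $p=2$, applied to $z\cd$) into the statement that $\sup\{\|\Lambda_0x\cd\|_{\lt}:\|\Lambda_jx\cd\|_{\lt}\le\delta_j,\ j=1,\ldots,m\}=\prod_{s=1}^l\delta_{j_s}^{\theta_{j_s}}$, and then Proposition~\ref{P1} with $\Lambda x=(\|\Lambda_1x\|,\ldots,\|\Lambda_Nx\|)$ upgrades this supremum identity to the exactness of the pointwise inequality.

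The main obstacle is purely bookkeeping rather than conceptual: one must track the normalization constants $(2\pi)^{\pm d}$ carefully through the change of variables so that the constraint thresholds, the error functional, and the form of the optimal method all match Theorem~\ref{T1} on the nose; in particular one should verify that $W_F^{\mathcal A}(\mathbb R^d)$ is mapped \emph{onto} (not merely into) $\mathcal W_2^{\mathcal A}(\mathbb R^d)$ so that no admissible competitor is lost, and that the map between recovery methods is genuinely a bijection so that optimality transfers in both directions. Once the dictionary is set up correctly, every assertion of Theorem~\ref{T2} is a direct translation of the $p=2$ instance of Theorem~\ref{T1} together with \eqref{EI} and Proposition~\ref{P1}.
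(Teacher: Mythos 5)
Your proposal is correct and follows essentially the same route as the paper: the paper obtains this corollary by specializing Theorem~\ref{T2} to $\varphi(\xi)=i\xi$ (for which the hypothesis of Theorem~\ref{T1} that $|\varphi_j(\wxi\,)|=a_j$ be solvable is trivially met by $\wxi=(a_1,\ldots,a_d)$), and Theorem~\ref{T2} is itself proved by exactly the Plancherel change of variables $z\cd=(2\pi)^{-d}Fx\cd$ reducing everything to the $p=2$ case of Theorem~\ref{T1}, with the exact inequality coming from \eqref{EI} and Proposition~\ref{P1}. The only step you leave implicit is the final specialization to the Weyl derivatives $D^{\alpha^j}$, which is immediate.
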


Set
$$\Lambda_\theta^{\alpha^j}=F^{-1}(\psi_\theta^{\alpha^j}\cd Fx\cd)\cd,\quad j=0,1,\ldots,N,$$
where the function $\psi_\theta\cd$ is defined by \eqref{psi}. Note that $\Lambda_2=-\Delta$, where $\Delta$ is the Laplace operator. In this case $Q$ is the set on $\mathbb R^2$ because $k=1$. Consequently, $1\le l\le2$. Assume that $\alpha_0\notin\mathcal A$ (otherwise the answer is written out in a trivial way). Then $l=2$.

\begin{corollary}
Let $\alpha^0\in\co\mathcal A$ and $\alpha_0\notin\mathcal A$. Then
$$E(\Lambda_\theta^{\alpha^0},\WFt,\delta)=e^{-S(\alpha^0)}.$$
If $M\ne\emptyset$, then all methods of the form
$$\Phi(y\cd)\cd=F^{-1}\biggl(\sum_{j\in M}a_j\cd Fy_j\cd\biggr),$$
where functions $a_{j_1}\cd$, $a_{j_2}\cd$ satisfy the conditions
\begin{gather*}
\psi_\theta^{\alpha^{j_1}}(\xi)a_{j_1}(\xi)+\psi_\theta^{\alpha^{j_2}}(\xi)a_{j_2}(\xi)=
\psi_\theta^{\alpha^0}(\xi),\\
\dfrac{\delta_{j_1}^2|a_{j_1}(\xi)|^2}
{\theta_1}+\dfrac{\delta_{j_2}^2|a_{j_2}(\xi)|^2}{1-\theta_1}\le e^{-2S(\alpha^0)}
\end{gather*}
for almost all $\xi\in\mathbb R^d$, are optimal.

If $M=\emptyset$, then the method $\Phi(y\cd)\cd=0$ is optimal.

The exact inequality
$$\|\Lambda_\theta^{\alpha^0}x\cd\|_{\lt}\le\|\Lambda_\theta^{\alpha^{j_1}}x\cd\|_{\lt}
^{\theta_{j_1}}\|\Lambda_\theta^{\alpha^{j_2}}x\cd\|_{\lt}
^{1-\theta_{j_1}}$$
holds.
\end{corollary}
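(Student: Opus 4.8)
The plan is to obtain this corollary as the special case $k=1$, $\varphi_1\cd=\psi_\theta\cd$, of Theorem~\ref{T2}. First I would check that $\psi_\theta\cd$ satisfies the hypotheses imposed on $\varphi_1\cd$ in Theorem~\ref{T1}. The function $\psi_\theta(\xi)=(|\xi_1|^\theta+\ldots+|\xi_d|^\theta)^{2/\theta}$ is continuous on $\mathbb R^d$ and nonnegative, so $|\psi_\theta(\xi)|=\psi_\theta(\xi)$; along the line $\xi=(t,0,\ldots,0)$ one has $\psi_\theta(t,0,\ldots,0)=t^2$, which exhausts $[0,\infty)$. Hence for every $a_1>0$ the point $\wxi=(\sqrt{a_1},0,\ldots,0)$ satisfies $|\psi_\theta(\wxi\,)|=a_1$, which is exactly the range condition of Theorem~\ref{T1} for $k=1$.

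Next I would justify that $l=2$. Since $k=1$, the Caratheodory decomposition \eqref{510th} attached to the support line of the graph of $S\cd$ at $\alpha^0$ uses $l\le k+1=2$ points. If $l=1$, then \eqref{510th} gives $\theta_{j_1}=1$ and $\alpha^0=\alpha^{j_1}\in\mathcal A$, contradicting the assumption $\alpha^0\notin\mathcal A$; therefore $l=2$. Writing the two indices as $j_1,j_2$ and setting $\theta_1:=\theta_{j_1}$, equation \eqref{510th} also gives $\theta_{j_2}=1-\theta_1$ and $\alpha^0=\theta_1\alpha^{j_1}+(1-\theta_1)\alpha^{j_2}$.

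With these two observations the three assertions of the corollary are read off directly from Theorem~\ref{T2} applied with $\varphi=\psi_\theta$: the identity $E(\Lambda_\theta^{\alpha^0},\WFt,\delta)=e^{-S(\alpha^0)}$; the characterization of optimal methods, where the single constraint $\sum_{s=1}^l\delta_{j_s}^2|a_{j_s}(\xi)|^2/\theta_{j_s}\le e^{-2S(\alpha^0)}$ of Theorem~\ref{T2} becomes, for $l=2$, the stated inequality $\delta_{j_1}^2|a_{j_1}(\xi)|^2/\theta_1+\delta_{j_2}^2|a_{j_2}(\xi)|^2/(1-\theta_1)\le e^{-2S(\alpha^0)}$; and the exact inequality, which is the $l=2$ instance of the last display of Theorem~\ref{T2} (equivalently, it follows from \eqref{EI} together with Proposition~\ref{P1}, since $e^{-S(\alpha^0)}=\delta_{j_1}^{\theta_1}\delta_{j_2}^{1-\theta_1}$). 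There is essentially no obstacle beyond unwinding definitions; the only point requiring a word of care is the verification that $\psi_\theta\cd$ attains all positive values — it is precisely because the two-dimensional geometry of $Q$ could otherwise force $l=1$ that the degenerate possibility $\alpha^0\in\mathcal A$ is excluded at the outset.
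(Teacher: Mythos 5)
Your proposal is correct and follows essentially the same route as the paper: the paper likewise obtains this corollary by specializing Theorem~\ref{T2} to $k=1$, $\varphi=\psi_\theta$, noting that $Q\subset\mathbb R^2$ forces $l\le2$ and that the exclusion $\alpha^0\notin\mathcal A$ rules out $l=1$, after which the three assertions are read off directly. Your explicit check that $\psi_\theta$ attains every positive value (via $\psi_\theta(t,0,\ldots,0)=t^2$) is a small detail the paper leaves implicit, but it is the right verification.
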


In particular, for $\theta=2$ we obtain the exact inequality
$$\|(-\Delta)^{\alpha^0}x\cd\|_{\lt}\le\|(-\Delta)^{\alpha^{j_1}}x\cd\|_{\lt}
^{\theta_{j_1}}\|(-\Delta)^{\alpha^{j_2}}x\cd\|_{\lt}
^{1-\theta_{j_1}}.$$


\begin{thebibliography}{11}

\bibitem{OsIz} K.~Yu.~Osipenko, ``On the construction of families of optimal recovery methods for linear operators'', Izv. Math., 88, 1 (2024), 92--113.

\bibitem{Os1} K.~Yu.~Osipenko, Introduction to optimal recovery theory, Lan’, St Petersburg 2022, 388 pp. (in Russian).

\bibitem{MT} G.~G.~Magaril-Il'yaev, V.~M.~Tikhomirov, ``Kolmogorov-type inequalities for derivatives'', {\it Sb. Math.,} {\bf188}:12 (1997), 1799--1832.

\bibitem{3} G.~G.~Magaril-Il'yaev, K.~Yu.~Osipenko, ``On the reconstruction of convolution-type operators from inaccurate information'', {\it Proc. Steklov Inst. Math.,} {\bf269} (2010), 174--185.

\bibitem{4} G.~G.~Magaril-Il'yaev, E.~O.~Sivkova, ``On the best recovery of a family of operators on the manifold $\mathbb R^n\times\mathbb T^m$'', {\it Proc. Steklov Inst. Math.,} {\bf323} (2023), 188--196.

\bibitem{5} E.~V.~Abramova, E.~O.~Sivkova, ``On the best recovery of a family of operators on a class of functions according to their inaccurately specified spectrum'', {\it Vladikavkaz. Mat. Zh.,}, {\bf26}:1 (2024), 13--36 (in Russian).

\bibitem{6} E.~V.~Abramova, E.~O.~Sivkova, ``On the optimal recovery of one family of operators on a class of functions from approximate information about its spectrum'', {\it Sib. Math. J.,} {\bf65}:2 (2024), 245--256.

\bibitem{Ca} F.~Carlson, ``Une in\'egalit\'e'', {\it Ark. Mat. Astr. Fysik.,} {\bf25B} (1934), 1--5.

\bibitem{Le} V.~I.~Levin, ``Exact constants in inequalities of Carlson type'', {\it Dokl. Akad. Nauk SSSR}, {\bf 59}:4 (1948), 635--638 (in Russian).

\bibitem{An} F.~I.~Andrianov, ``Multidimensional analogs of Carlson inequalities and its generalizations'', {\it Izv. Vyssh. Uchebn. Zaved. Mat.}, {\bf 56}:1 (1967), 3--7 (in Russian).

\bibitem{Ar2} V.~V.~Arestov, ``Approximation of linear operators, and related extremal problems'', {\it Proc. Steklov Inst. Math.,} {\bf138} (1977), 31--44.

\bibitem{B} S.~Barza, V.~Burenkov, J.~Pe\v cari\'c, L.-E.~Persson, ``Sharp multidimentional multiplicative inequalities for weighted $L_p$ spaces with homogeneous weights'', {\it Math. Ineq. Appl.,} {\bf1}:1 (1998) 53--67.

\bibitem{Lu} M.-J.~Luo, R.~K.~Raina, ``A new extension of Carlson’s inequality'', {\it Math. Ineq. Appl.,} {\bf19}:2 (2016) 417--424.

\bibitem{Os} K.~Yu.~Osipenko, ``Optimal recovery of operators and multidimensional Carlson type inequalities'', {\it J. Complexity,} {\bf32}:1 (2016) 53--73.

\bibitem{Os21} K.~Yu.~Osipenko, ``Inequalities for derivatives with the Fourier transform'', {\it Appl. Comp. Harm. Anal.,} {\bf53} (2021) 132--150.
\end{thebibliography}
\end{document}